\newtheorem{thm}{Theorem}
\newtheorem{prop}{Proposition}
\theoremstyle{definition}
\newtheorem{defn}{Definition}
\newtheorem{rem}{Remark}
\newtheorem{prob}{Problem}
\renewcommand{\Re}{\mathbb R}
\DeclareMathOperator{\bd}{bd}
\DeclareMathOperator{\conv}{conv}
\DeclareMathOperator{\area}{area}
\DeclareMathOperator{\vol}{vol}
\def\note#1{\ifvmode\leavevmode\fi\vadjust{\vbox to0pt{\vss
 \hbox to 0pt{\hskip\hsize\hskip1em
\vbox{\hsize4.5cm\small\raggedright\pretolerance10000
 \noindent #1\hfill}\hss}\vbox to8pt{\vfil}\vss}}}
\begin{document}
\title[Maximum area circumscribed polygons]{An algorithm to find maximum area
  polygons circumscribed about a convex polygon}
\author[M. Ausserhofer, S. Dann, Z. L\'angi \and G. T\'oth]{Markus
  Ausserhofer, Susanna Dann, Zsolt L\'angi \and
G\'eza T\'oth}
\address{Markus Ausserhofer, University of Vienna, Oskar-Morgenstern-Platz 1,
  1010 Vienna,
Austria}
\email{markus.ausserhofer@hotmail.com}
\address{Susanna Dann, Institute of Discrete Mathematics and Geometry, Vienna
  University of Technology, Wiedner Hauptstrasse 8-10, 1040 Vienna,
Austria}
\email{susanna.dann@tuwien.ac.at}
\address{Zsolt L\'angi, Dept.\ of Geometry, Budapest University of Technology and Economics, Budapest, Egry J\'ozsef u. 1., Hungary, 1111\\
Research Group of Morphodynamics, Hungarian Academy of Sciences,
supported by the National Research, Development and Innovation Office, NKFI, K-119670}
\email{zlangi@math.bme.hu}
\address{G\'eza T\'oth, Alfr\'ed R\'enyi Institute of Mathematics, Hungarian
  Academy of Sciences, Re\'altanoda u. 13-15., 1053 Budapest, Hungary,
  supported by the National Research, Development and Innovation Office,
  NKFIH, K-111827.
}
\email{toth.geza@renyi.mta.hu}

\subjclass{52A38 \and 52B60 \and 68W01 \and 62H12}

\keywords{{circumscribed polygon, area, Gini index}}

\begin{abstract}
A convex polygon $Q$ is \emph{circumscribed} about a convex polygon $P$ if
every vertex of $P$ lies on at least one side of $Q$. We present an algorithm
for finding a maximum area convex polygon circumscribed about any given convex
$n$-gon in $O(n^3)$ time. As an application, we disprove a conjecture of
Farris. Moreover, for the special
case of regular $n$-gons we find an explicit solution.
\end{abstract}

\maketitle


\section{Introduction}

The algorithmic aspects of finding convex polygons under geometric constraints
with some extremal property have been studied for a long time. We list just a
few examples. Boyce et al. \cite{BDDG85} dealt with the problem of finding
maximum area or perimeter convex $k$-gons with vertices in a given set of $n$
points in the plane. Eppstein et al. \cite{EOR92} presented an algorithm that
finds minimum area convex $k$-gons with vertices in a given set of $n$ points
in the plane. Minimum area triangles \cite{KL85, RAMB86, BC14} or more
generally, convex $k$-gons \cite{DB83, ACY85}, enclosing a convex $n$-gon with
$k < n$ were studied in several papers. Other variants, where area is replaced
by another geometric quantity, were also investigated, see
e.g. \cite{MP08}. Maximum area convex polygons in a given simple polygon were
examined, e.g. in \cite{BR03, MRCD12}. Algorithms to find polygons with a
minimal number of vertices, nested between two given convex polygons, were
presented in  \cite{BC14}. The authors of \cite{ST94} examined among other
questions the problem of placing the largest homothetic copy of a convex
polygon in another convex polygon. For more information on geometry-related
algorithmic questions, see \cite{AS95}.

\begin{defn}
Let $P \subset \Re^2$ be a convex $n$-gon. If $Q$ is a convex polygon that
contains $P$ and each vertex of $P$ is on the boundary of $Q$,
then we say that $Q$ is
\emph{circumscribed} about $P$.
Set
\[
A(P) = \sup \{ \area(Q) :Q \hbox{ circumscribed about } P \},
\]
if it exists. If $\area(Q) = A(P)$ and $Q$ is circumscribed about $P$, then
$Q$ is a
\emph{maximum area polygon circumscribed about $P$}.
\end{defn}

Note that any side of a polygon $Q$ circumscribed about $P$ contains at
most two vertices of $P$, and thus, it has at least $\frac{n}{2}$ sides.
Such a polygon may have arbitrarily many sides that do not contain any
vertex of $P$.
Nevertheless, it is not hard to see that if $Q$ is a maximum area polygon
circumscribed about $P$, then every side of $Q$ contains at least one
vertex of $P$, and hence, it has at most $n$ sides.
Furthermore, $A(P)$ is finite if and only if the sum of any two consecutive angles
of $P$ is greater than $\pi$. Indeed, if this property holds, then the
area of any polygon circumscribed about $P$
is less than the sum of the area of $P$ and the areas of all triangles
bounded by three consecutive sidelines of $P$.
On the other hand, if $P$ has two consecutive angles whose sum is at
most $\pi$, then there is a point $q$ arbitrarily far from $P$
such that $\conv (P \cup \{ q \})$ is a convex $(n+1)$-gon.
Since this polygon is circumscribed about $P$ and it can
have arbitrarily large area, in this case $A(P) = \infty$.
In particular, this means that if $A(P) < \infty$, then $P$ has at least
five vertices.
%
%

Given any convex polygon $P$, our aim is to find a maximum area convex polygon
circumscribed about $P$. We investigate the properties of these polygons
and present an algorithm to find them.
Our results can be used to bound an integral of a positive convex function.
As an application, we bound an integral of the Lorenz curve and disprove a conjecture of Farris about the Gini index in statistics.

This paper is organized as follows: In Section~\ref{sec:preliminaries} we establish some geometric properties of maximum
area polygons circumscribed about a convex $n$-gon. In Section~\ref{sec:algorithm}
we present an algorithm with $O(n^3)$ running time that finds $A(P)$ and the maximum area polygons circumscribed about $P$.
Suppose that $Q$ is  circumscribed about $P$. Let $S_1, \ldots , S_n$ be sides
of $P$ in counterclockwise order. We say that $S_i$ is
``used'' by $Q$ if it is on the boundary of $Q$, and ``not used'' otherwise.
We can assign a sequence from $\{U,N\}^n$ to $Q$ such that the $i$th term is $U$
if $S_i$ is used and $N$ otherwise.
In Section~\ref{sec:sequences} we investigate the following problem: which
sequences can be assigned to a maximum area circumscribed polygon, for some $P$.
We give a complete solution to this problem. In particular, we correct an
error that
appeared in a previous, published version \cite{ADLT} of this manuscript,
based on a
faulty construction in the proof of Theorem~\ref{thm:structure}.  
In Section~\ref{sec:statistics} we describe an application of our method to
statistics, and disprove a conjecture of Farris in \cite{Farris}.
Finally, in Section~\ref{sec:remarks} we collect our additional remarks and
propose some open problems.

Throughout this paper, $P \subset \Re^2$ denotes a convex $n$-gon,
$n \geq 5$, and the sum of any two consecutive angles of $P$ is greater than
$\pi$. The vertices of $P$ are denoted by $p_1, p_2, \ldots ,p_n$, in
counterclockwise order. We extend the indices to all integers so that indices are
understood mod $n$; that is, we let $p_i=p_j$ if $i\equiv j\bmod n$.
For any $i$, we denote the side $p_ip_{i+1}$ of $P$ by $S_i$.
By $T_i$ we denote the triangle bounded by $S_i$ and the lines through $S_{i-1}$ and $S_{i+1}$;
it is called the $i$th {\em external triangle}. Clearly, if $Q$ is
circumscribed about $P$, then every vertex of $Q$ lies in an
external triangle of $P$. Note that if three consecutive vertices of $Q$
lie on the same line, then removing the middle vertex does not change
the area of $Q$. Thus, without loss of generality, in our investigation
we deal only with circumscribed polygons without an angle equal to $\pi$.
This implies, in particular, that if a side of $P$ is used
(i.e. it is contained in the boundary of the circumscribed polygon $Q$),
then it is contained in a side of $Q$.
We denote the boundary of $Q$ by $\bd (Q)$, and for any point
$p \in \Re^2$ and set $S \subset \Re^2$, we call the rotated
copy of $S$ about $p$ by $\pi$ the \emph{reflection of $S$ about $p$}.


\section{Geometric properties of maximum area circumscribed
polygons}\label{sec:preliminaries}

\begin{thm}\label{thm:characterization}
For any $i,j$ with $i \leq k \leq i+n$, let $Q$ be a convex polygon circumscribed about $P$ of maximal area containing $S_{k}, S_{k+1}, \ldots ,S_{i+n}$ on its boundary: that is, these edges of $P$ are used by $Q$. Then for every $j =i+2, i+3, \ldots,k-1$
either

\noindent (a) $p_j$ is the midpoint of the side of $Q$ containing it,

or

\noindent (b) $S_{j-1}$ or $S_j$ lies on $\bd (Q)$.

\end{thm}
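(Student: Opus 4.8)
Fix an index $k$ with $i+2\le k\le j-1$, so that both $S_{k-1}$ and $S_k$ lie in the ``free'' range $S_{i+1},\dots,S_{j-1}$ and are not forced to be used. If $S_{k-1}$ or $S_k$ lies on $\bd Q$ we are in case (b) and there is nothing to prove, so I would assume neither does. The first step is to pin down the local picture at $p_k$. A side of $Q$ is a supporting line of $P$, hence it meets $P$ either in the single point $p_k$ or in a whole edge of $P$; the latter would put $S_{k-1}$ or $S_k$ on $\bd Q$, against our assumption. Moreover, at a maximum the boundary of $Q$ cannot bend at $p_k$ unless one of the two extreme supporting lines there (the lines of $S_{k-1}$ and $S_k$) is used, so $p_k$ is not a vertex of $Q$. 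Thus $p_k$ lies in the relative interior of a unique side $\ell$ of $Q$ that is tangent to $P$ only at $p_k$, and the direction of $\ell$ is strictly between those of $S_{k-1}$ and $S_k$.

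Next I would set up a one-parameter variation. Keep every side of $Q$ except $\ell$ fixed and rotate $\ell$ about the fixed point $p_k$ through a small angle $\theta$, calling the result $Q_\theta$. Writing $m_1,m_2$ for the lines carrying the two sides of $Q$ adjacent to $\ell$, the endpoints of the rotated side are $v(\theta)=\ell_\theta\cap m_1$ and $w(\theta)=\ell_\theta\cap m_2$, with $p_k$ strictly between them. For $|\theta|$ small, $Q_\theta$ is again a convex polygon circumscribed about $P$ using exactly the prescribed edges $S_j,\dots,S_{i+n}$: it contains $P$ because $\ell_\theta$ still supports $P$ at $p_k$ (its direction stays inside the admissible arc), it remains convex because the standing convention that $Q$ has no interior angle equal to $\pi$ keeps the angles at $v,w$ strictly below $\pi$, and the incidences on the untouched sides persist while $p_k$ stays on $\ell_\theta$. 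Hence $Q_\theta$ is an admissible competitor and $\theta=0$ is an interior maximum of $\theta\mapsto\area(Q_\theta)$.

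The heart of the argument is the first-order computation. As $\ell$ rotates by $d\theta$ about $p_k$, the region swept between the old and the new side splits at $p_k$ into two infinitesimal triangles, one at each end, whose leading-order areas are $\tfrac12 r_1^2\,d\theta$ and $\tfrac12 r_2^2\,d\theta$, where $r_1=|p_kv|$ and $r_2=|p_kw|$. Since the two ends of the chord sweep in opposite senses, they enter $\area(Q_\theta)$ with opposite signs, so that $\frac{d}{d\theta}\area(Q_\theta)\big|_{\theta=0}=\pm\tfrac12\bigl(r_1^2-r_2^2\bigr)$. Because $\theta=0$ is an interior maximum, this derivative vanishes, forcing $r_1=r_2$; that is, $p_k$ is equidistant from the two endpoints of $\ell$ and hence is its midpoint, which is case (a). (This is the familiar fact that, among the chords of an angle through a fixed interior point, the cut-off triangle is stationary exactly when the point bisects the chord, and it sidesteps any discussion of whether the apex $m_1\cap m_2$ lies inside or outside $Q$.)

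I expect the main obstacle to be the feasibility bookkeeping of the first two paragraphs rather than the computation. One must verify carefully that, when neither (a) nor (b) holds, the configuration is strictly interior to \emph{all} active constraints, so that a genuinely two-sided rotation about $p_k$ is admissible: that $\ell$ touches $P$ in $p_k$ alone, that $p_k$ is interior to $\ell$ rather than a vertex of $Q$ (the delicate point, which I would settle by showing that a bend of $\bd Q$ at $p_k$ with both incident directions strictly inside the arc can be opened to strictly larger area, reducing to case (b)), that $v,w$ stay valid vertices in their external triangles for small $\theta$, and that the direction of $\ell$ is strictly interior to the arc bounded by $S_{k-1}$ and $S_k$, whose two extreme positions are precisely the alternatives in (b). The no-angle-$\pi$ convention and the tangency of $\ell$ to $P$ at $p_k$ alone are exactly what make each of these an open condition; once they are in hand, the sign-free implication ``interior maximum $\Rightarrow$ vanishing first derivative $\Rightarrow$ midpoint'' closes the proof.
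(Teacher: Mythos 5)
Your proposal is correct and takes essentially the same route as the paper: pin down that $p_k$ lies in the relative interior of a single side of $Q$ whose direction is strictly between those of $S_{k-1}$ and $S_k$, then rotate that side about $p_k$ and compare the two swept triangles of areas $\approx\tfrac12 r_1^2\theta$ and $\tfrac12 r_2^2\theta$, forcing $r_1=r_2$ at a maximum. The only difference is packaging — the paper argues by a one-sided strict improvement (rotating toward the longer segment increases the area, contradicting maximality) rather than by a vanishing first derivative, and it dismisses the possibility that $p_k$ is a vertex of $Q$ as ``clear,'' a point you rightly flag as delicate and sketch an opening-the-bend argument for.
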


\begin{proof}
Assume that neither $S_{j-1}$ nor $S_j$ lies on $\bd (Q)$. This clearly implies
that $p_j$ belongs to exactly one side of $Q$, we denote it by $V$. We show
that in this case $p_j$ is the midpoint of $V$.
Let $V_{+}$ (resp. $V_{-}$) be the side of $Q$ immediately after
(resp. before) $V$ in the counterclockwise order, let $q_{+}=V\cap V_{+}$ and
$q_{-}=V\cap V_{-}$, and let $L$, $L_{+}$, $L_{-}$ be lines through $V$, $V_{+}$
and $V_{-}$, respectively.
Suppose that $p_k$ is not the midpoint of $V$. We can assume without the loss
of generality that $|p_j q_{+}| > |p_k q_{-}|$. Rotate $L$ about $p_k$ by a very
small angle $\alpha$ in the
clockwise direction, denote the resulting line by $L'$ {and its intersection with $L_{+}$ and $L_{-}$ by $q'_{+}$ and $q'_{-}$, respectively}.
Let $T_{+}$ be the triangle determined by $L_{+}$, $L$, and $L'$, and let
$T_{-}$ be the triangle determined by $L_{-}$, $L$, and $L'$.
Since $|p_j q_{+}| > |p_j q_{-}|$,
$\area(T_{+}) = |p_jq_{+}| \alpha + O(\alpha^2)$, and $\area(T_{-}) = |p_jq_{-}| \alpha + O(\alpha^2)$.
Thus, if $\alpha$ is very small, then $\area(T_{+}) > \area(T_{-})$.
Another possible argument is that
since $|p_jq_{+}| > |p_jq_{-}|$ and $\alpha$ is very small, we have that $|p_jq'_{+}| > |p_jq'_{-}|$, implying that
the reflection of $T_{+}$ about $p_k$ contains $T_{-}$ (cf. Figure~\ref{fig:thm1}).
Thus modifying $Q$ by replacing $L$ by $L'$ would increase its area.
This implies that if $Q$ has the maximum area, then $p_j$ is the midpoint of $V$.
\end{proof}

\begin{figure}
\begin{centering}
\includegraphics[width=0.7\textwidth]{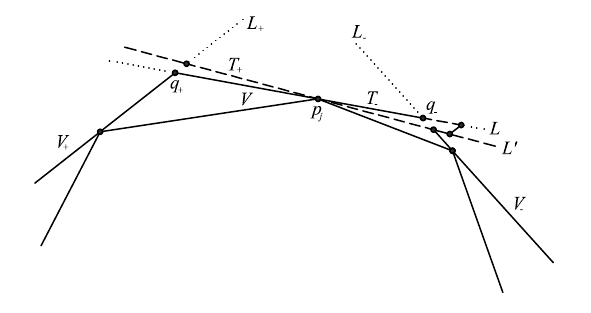}
\caption[]{Replacing $L$ by $L'$ increases the area of $Q$}
\label{fig:thm1}
\end{centering}
\end{figure}

\begin{rem}
  Using the same proof, it is clear that Theorem \ref{thm:characterization}
  holds also for any maximum area polygon $Q$
  not restricted to use any side of $P$.
\end{rem}

\begin{defn}\label{defn:midpointprop}
Let $1 \leq k \leq n$ and let $q_0, q_1, \ldots , q_k$ be points in the plane.
We say that the polygonal curve $C=q_0q_1\cdots q_k$  satisfies the \emph{midpoint property for the index $i$},
if for $j=1,2,\ldots,k$, the vertex $p_{i+j}$ of $P$ is the midpoint of $q_{j-1}q_j$.

If $k=n$ and $q_0=q_k$, that is if $C$ is a closed polygonal curve
and it satisfies the midpoint property for some index $i$, then we say that $C$
satisfies the \emph{midpoint property}.
\end{defn}

\begin{thm}\label{thm:Pmidpointprop}
We have the following:
\begin{itemize}
\item[(\ref{thm:Pmidpointprop}.1)] If $n$ is odd, there is exactly one closed polygonal curve satisfying the midpoint property.
\item[(\ref{thm:Pmidpointprop}.2)] If $n$ is even and $\sum_{k=1}^n (-1)^k p_k \neq 0$, then there is no closed polygonal curve satisfying the midpoint property.
\item[(\ref{thm:Pmidpointprop}.3)] If $n$ is even and $\sum_{k=1}^n (-1)^k p_k = 0$, then for every $q \in \Re^2$, there is exactly one closed polygonal curve $C$ satisfying the midpoint property such that $q$ is the common endpoint of the two sides of $C$ containing $p_1$ and $p_n$. In addition, the absolute value of the signed area of $C$ is independent of $q$.
\end{itemize}
\end{thm}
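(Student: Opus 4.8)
The plan is to turn the midpoint property into a linear recurrence, read the three combinatorial statements off its explicit solution, and then handle the area claim by a separate shoelace computation. By relabelling the vertices $q_j$ of the closed curve (a cyclic shift of the index $i$ merely renames them), it suffices to treat the index $i=0$, so that $p_j$ is the midpoint of $q_{j-1}q_j$ for $j=1,\dots,n$ and the closure condition is $q_n=q_0$. First I would rewrite $p_j=\tfrac12(q_{j-1}+q_j)$ as $q_j=2p_j-q_{j-1}$ and iterate to obtain
\[
q_j=(-1)^j q_0+2\sum_{l=1}^{j}(-1)^{j-l}p_l,\qquad j=0,1,\dots,n.
\]
Imposing $q_n=q_0$ yields a single vector equation whose nature depends on the parity of $n$: for $n$ odd the coefficient of $q_0$ is $-1$, so $q_0$ is uniquely determined and with it the whole curve, which gives (\ref{thm:Pmidpointprop}.1); for $n$ even the $q_0$-terms cancel and the equation collapses to the $q_0$-free condition $\sum_{l=1}^n(-1)^{n-l}p_l=0$.

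Since $n$ is even, $(-1)^{n-l}=(-1)^l$, so after renaming the summation index this condition is exactly $\sum_{k=1}^n(-1)^kp_k=0$. Consequently, if $\sum_{k=1}^n(-1)^kp_k\neq0$ the closure equation is unsolvable and no closed curve exists, proving (\ref{thm:Pmidpointprop}.2); and if $\sum_{k=1}^n(-1)^kp_k=0$ the closure equation holds for every $q_0\in\Re^2$. Because $q_0$ determines $q_1,\dots,q_{n-1}$ through the recurrence, each choice $q_0=q$ produces exactly one closed curve, and in it $q=q_0=q_n$ is the shared endpoint of the edges $q_0q_1$ and $q_{n-1}q_n$ carrying $p_1$ and $p_n$. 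This establishes the existence-and-uniqueness part of (\ref{thm:Pmidpointprop}.3).

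It remains to prove, in the case $\sum_{k=1}^n(-1)^kp_k=0$, that the area does not depend on $q$; I expect this to be the only genuinely delicate step. Setting $a_j=2\sum_{l=1}^{j}(-1)^{j-l}p_l$, so that $q_j=(-1)^jq+a_j$ with $a_0=0$ and $a_{j}+a_{j-1}=2p_{j}$, I would substitute into the shoelace formula, which writes twice the signed area of $C$ as $\sum_{j=0}^{n-1}q_j\times q_{j+1}$, where $u\times w:=u_1w_2-u_2w_1$ and $q_n=q_0$. Expanding each product, the term quadratic in $q$ is a multiple of $q\times q=0$, while the two mixed terms combine through $a_{j+1}+a_j=2p_{j+1}$ into $2(-1)^j\,q\times p_{j+1}$; reindexing by $m=j+1$ turns their sum into $-2\,q\times\sum_{m=1}^n(-1)^mp_m$, which vanishes precisely because $\sum_{m=1}^n(-1)^mp_m=0$. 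What remains is $\sum_{j=0}^{n-1}a_j\times a_{j+1}$, independent of $q$, so the signed area and hence its absolute value are independent of $q$. The crux is thus recognizing that the entire $q$-dependence of the area is carried by the very alternating sum that the hypothesis forces to be zero; the rest follows mechanically from the explicit recurrence.
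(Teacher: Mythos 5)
Your proposal is correct and follows essentially the same route as the paper: both solve the recurrence $q_j = 2p_j - q_{j-1}$ explicitly, read off the parity dichotomy from the closure condition $q_n=q_0$, and then show the $q$-dependence of the shoelace area reduces to a term proportional to $q \times \sum_{k=1}^n (-1)^k p_k$, which vanishes by hypothesis. The only cosmetic difference is that the paper first simplifies via $|q_{k-1},q_k| = 2|q_{k-1},p_k|$ before substituting, while you expand the shoelace sum directly and use $a_j + a_{j+1} = 2p_{j+1}$; these are the same computation in different bookkeeping.
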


\begin{proof}
Let $C = q_0q_1\cdots q_n$, $q_0=q_n$ be a closed polygonal curve satisfying the midpoint property for the index $0$. For every $k$, $q_k$ is the reflection of $q_{k-1}$ about $p_k$. Thus setting $q:=q_0$, we have $q_1 = 2 p_1 -q$, $q_2 = 2 p_2-2 p_1+q$, or in general, $q_k = 2 \sum_{j=1}^k (-1)^{k-j} p_j + (-1)^k q$. In particular, since $C$ is closed, we obtain $q = 2\sum_{k=1}^{n} (-1)^{n-k} p_k + (-1)^n q$. If $n$ is odd, it follows that $q
= \sum_{k=1}^{n} (-1)^{n-k} p_k$, proving (\ref{thm:Pmidpointprop}.1). If $n$ is even, it follows that $\sum_{k=1}^{n} (-1)^{n-k} p_k = \sum_{k=1}^{n} (-1)^k p_k = 0$, implying (\ref{thm:Pmidpointprop}.2) and the first part of (\ref{thm:Pmidpointprop}.3).

Next we show that the signed area of $C$, also denoted by $\area(C)$, is independent of $q$. For any $u,v \in \Re^2$, we denote by $|u,v|$ the determinant of the $2 \times 2$ matrix with $u$ and $v$ as its columns. Since for every $k$, $|q_{k-1},q_k|=|q_{k-1},q_{k-1}+q_k|=2 |q_{k-1},p_k|$, we obtain that
\[
\area(C) = \frac{1}{2} \sum_{k=1}^{n} |q_{k-1},q_k| = \sum_{k=1}^{n} |q_{k-1},p_k| =
\sum_{k=1}^{n} \left| 2\sum_{j=1}^{k-1} (-1)^{k-1-j} p_j + (-1)^{k-1} q, p_k\right| .
\]
Thus for some function $f=f(p_1,p_2,\ldots,p_n)$, we have
\[
\area(C) = f(p_1,p_2,\ldots,p_n) + \sum_{k=1}^{n} | (-1)^{k-1} q, p_k | =
f(p_1,p_2,\ldots,p_n) - \left| q, \sum_{k=1}^{n} (-1)^k p_k \right|,
\]
which is independent of $q$, since $\sum_{k=1}^{n} (-1)^k p_k = 0$.
\end{proof}

The following variant of Theorem~\ref{thm:Pmidpointprop} can also be proved. We omit the proof since it is based on exactly the same calculations
as Theorem~\ref{thm:Pmidpointprop}.

\begin{thm}\label{thm:partlyPmidpoint}
Let $1 \leq k < n-1$.  Let $\mathcal{C}$ denote the family of polygonal curves
$C=q_0q_1\cdots q_k$ satisfying the midpoint property for the index $i$ such that $q_0$ lies on the line $L_{i-1}$
through $S_{i-1}$ and $q_k$ lies on the line $L_{i+k+1}$ through $S_{i+k+1}$.
\begin{itemize}
\item[(\ref{thm:partlyPmidpoint}.1)] If $L_{i-1}$ and $L_{i+k+1}$ are not parallel, then $\mathcal{C}$ has exactly one element.
\item[(\ref{thm:partlyPmidpoint}.2)] If $L_{i-1}$ and $L_{i+k+1}$ are parallel and $L_{i+k+1} \neq 2\sum_{j=1}^k (-1)^{k-j} p_{i+j} + (-1)^k L_{i-1}$, then $\mathcal{C} = \emptyset$.
\item[(\ref{thm:partlyPmidpoint}.3)] If $L_{i-1}$ and $L_{i+k+1}$ are parallel and $L_{i+k+1} = 2\sum_{j=1}^k (-1)^{k-j} p_{i+j} + (-1)^k L_{i-1}$, then for every $q_0 \in L_{i-1}$ there is exactly one polygonal curve $C \in \mathcal{C}$ that starts at $q_0$. Furthermore, the signed area enclosed by $p_{i-1}q_0 \cup C \cup q_k,p_{i+k+2} \cup \left( \bigcup_{j=i+k+2}^{n+k-2} S_j \right)$ is independent of the choice of $q_0$.
\end{itemize}
\end{thm}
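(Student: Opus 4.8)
The plan is to run the same reflection argument as in the proof of Theorem~\ref{thm:Pmidpointprop}, now with the two endpoints constrained to the given lines. The midpoint property for the index $i$ says that $p_{i+j}$ is the midpoint of $q_{j-1}q_j$, i.e.\ $q_j = 2p_{i+j}-q_{j-1}$, so each $q_j$ is the reflection of $q_{j-1}$ about $p_{i+j}$ and the whole curve is determined by $q_0$. Iterating the recurrence gives $q_k = (-1)^k q_0 + b_k$ with $b_k = 2\sum_{m=1}^{k}(-1)^{k-m}p_{i+m}$; in other words $q_k = \phi(q_0)$ for the affine bijection $\phi(x)=(-1)^k x + b_k$. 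The key observation is that $\phi$ is either a translation ($k$ even) or a point reflection ($k$ odd), so it preserves directions: the image $\phi(L_{i-1})$ is a line parallel to $L_{i-1}$, and it is precisely the line $L':=b_k+(-1)^kL_{i-1}$ appearing in the hypotheses.

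Membership in $\mathcal{C}$ is then equivalent to $q_0\in L_{i-1}$ together with $\phi(q_0)\in L_{i+k+1}$, so $\mathcal{C}$ is in bijection (via $q_0\mapsto C$) with $\phi^{-1}(L'\cap L_{i+k+1})$. I would now split into the three cases exactly according to how $L'$ meets $L_{i+k+1}$. If $L_{i-1}$ and $L_{i+k+1}$ are not parallel then neither are $L'$ and $L_{i+k+1}$, so $L'\cap L_{i+k+1}$ is a single point and $\mathcal{C}$ has exactly one element, giving (\ref{thm:partlyPmidpoint}.1). If they are parallel then $L'\parallel L_{i+k+1}$: when $L'\neq L_{i+k+1}$ the intersection is empty and $\mathcal{C}=\emptyset$, which is (\ref{thm:partlyPmidpoint}.2); when $L'=L_{i+k+1}$ every $q_0\in L_{i-1}$ satisfies the endpoint condition and determines exactly one curve, giving the first assertion of (\ref{thm:partlyPmidpoint}.3).

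For the area statement I would compute the signed area by the shoelace formula, reusing the identity $|q_{j-1},q_j|=2|q_{j-1},p_{i+j}|$ (valid because $q_{j-1}+q_j=2p_{i+j}$) from the previous proof. The edges of the closed curve among the fixed vertices $p_{i+k+2},\dots,p_{i+n-1}=p_{i-1}$ contribute a constant, so only the edges $p_{i-1}q_0$, the edges of $C$, and $q_k p_{i+k+2}$ depend on $q_0$. Substituting $q_{j-1}=(-1)^{j-1}q_0+b_{j-1}$ and using bilinearity of $|\cdot,\cdot|$, all $q_0$-dependent terms collect into a single expression $|q_0,V|$ plus a constant, where $V=-p_{i-1}+2\sum_{j=1}^{k}(-1)^{j-1}p_{i+j}+(-1)^kp_{i+k+2}$. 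The main obstacle is the final step: showing $|q_0,V|$ is constant as $q_0$ ranges over $L_{i-1}$. Writing $q_0=q^\ast+td$ with $d$ a direction vector of $L_{i-1}$, this reduces to $|d,V|=0$, i.e.\ $V\parallel L_{i-1}$. I would establish this by rewriting $V=(-1)^k\bigl(p_{i+k+2}-\phi(p_{i-1})\bigr)$; since under the hypothesis $L'=L_{i+k+1}$ both $p_{i+k+2}$ and $\phi(p_{i-1})$ lie on $L_{i+k+1}$, their difference is parallel to $L_{i+k+1}$, hence to $L_{i-1}$ by the parallelism assumption. This forces $V\parallel L_{i-1}$ and completes (\ref{thm:partlyPmidpoint}.3).
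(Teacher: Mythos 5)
Your proof is correct and follows precisely the approach the paper intends: the paper gives no separate proof of this theorem, saying only that it follows ``using the same idea'' as Theorem~\ref{thm:Pmidpointprop}, and your argument is exactly that idea carried out --- the reflection recursion $q_j = 2p_{i+j}-q_{j-1}$ composing to the affine map $\phi(x)=(-1)^k x+b_k$ (a translation or point reflection), followed by the shoelace computation using the identity $|q_{j-1},q_j| = 2|q_{j-1},p_{i+j}|$. You in fact supply details the paper omits, most notably the verification that $V=(-1)^k\bigl(p_{i+k+2}-\phi(p_{i-1})\bigr)$ is parallel to $L_{i-1}$, which is the crux of the area-invariance claim in (\ref{thm:partlyPmidpoint}.3).
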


\begin{rem}\label{rem:lengthofonestep}

\

\begin{itemize}
	\item The unique starting point $q$ in (\ref{thm:Pmidpointprop}.1) can be found in $O(n)$-time by computing the quantity $q
= \sum_{k=1}^{n} (-1)^{n-k} p_k$. Same time is required for the computation of the solution $C$, for checking its convexity and for computing its area.
	\item In (\ref{thm:Pmidpointprop}.3) the region for all possible starting points $q$, resulting in a convex solution, can be found in $O(n \log n)$ steps. Indeed, the polygonal curve satisfying the midpoint property is convex if any only if each vertex lies in the corresponding external triangle $T_i$ of $P$. Each of these conditions gives three linear constraints on the starting point $q$. The constraints can be obtained in $O(n)$ steps, and the intersection of these $3n$ halfplanes can be computed in $O(n \log n)$ time \cite{BC08}.
	\item If there exists a convex solution $Q$ in (\ref{thm:Pmidpointprop}.3), then there is a convex solution $Q$ that contains a side of $P$. Indeed, if $q$ is on the boundary of the feasible region, then for some $j$, $q_j$ lies on a sideline of $P$, which
yields that $Q$ contains a side of $P$.
	\item In (\ref{thm:partlyPmidpoint}.1) the unique starting point $q_0$, the corresponding solution $C = q_0q_1\ldots q_k$, its convexity properties and its area, can be found in $O(k)$ steps, using the fact that $q_0$ is the intersection of $L_{i-1}$ with $2 \sum_{j=1}^k (-1)^{j-1} p_{i+j} + (-1)^k L_{i+k+1}$, and subsequently reflecting $q_0$ about $p_{i+1}, p_{i+2}, \ldots, p_{i+k}$.
\end{itemize}
\end{rem}


\section{An algorithm to find the maximum area circumscribed polygons}\label{sec:algorithm}

For any $i,j$ with $i< j \leq i+n$, we define $Q_{ij}$ to be a maximum area convex polygon circumscribed about $P$ with the property that the sides $S_j, S_{j+1}, \ldots, S_{i+n}=S_i$ lie on the boundary of $Q_{ij}$. Let $A_{ij}=A_{ij}(P)$ be the area of some $Q_{ij}$. Note that in the case $j=i+n$, a polygon $Q_{i(i+n)}$ is restricted to contain the side $S_{i+n}=S_i$ in its boundary. We extend this definition to any ordered pair of integers $(i,j)$ by taking indices modulo $n$.

We present a recursive algorithm which computes $A_{ij}$ for all $i < j \leq i+n$. It also finds $A(P)$ and the maximum area circumscribed polygons about $P$.

It follows from the definition that for $j=i+1$, $Q_{ij}=P$. For $j=i+2$, we add an external triangle $T_{i+1}$ to $P$. Now let $2 <k\leq n$ and suppose that we already know the value of $A_{i'j'}(P)$ for every  $i', j'$ with $ i'<j'<i'+k$. Let $j=i+k$. Consider a polygon $Q$ circumscribed about $P$ such that the sides $S_{j}, S_{j+1}, \ldots, S_{i+n}$ lie on the boundary of $Q$. We distinguish between $k$ types of such polygons.

Type (0): $\bd (Q)$ does not contain any of the sides $S_{i+1}, S_{i+2}, \ldots, S_{j-1}$.

Type ($\alpha$): $\bd (Q)$ contains the side $S_{i+\alpha}$ for some $1 \leq \alpha \leq k-1$.

Note that $Q$ can have several types, except for Type (0), which excludes the other types. We find the maximum area
of a circumscribed convex polygon of each type separately.

Type (0): By Theorem~\ref{thm:characterization}, for any convex polygon $Q$ of maximum area, each of the vertices $p_{i+2}, \ldots, p_{j-1}$ has to be the midpoint of the corresponding side of $Q$. Whether the sides $S_i$ and $S_j$ are parallel or no, the existence of $Q$ and its area can be found in $O(k)$-time. This follows from Theorem~\ref{thm:partlyPmidpoint} and Remark~\ref{rem:lengthofonestep}.

Type ($\alpha$): By Theorem~\ref{thm:characterization}, an optimal solution $Q_{ij}$ is a union of some - by assumption already known - $Q_{i, i+\alpha}$ and $Q_{i+\alpha, j}$. It contains the sides $S_{j}, S_{j+1}, \ldots, S_{i+n}$ and $S_{i+\alpha}$,
between $S_{i+n}$ and $S_{i+\alpha}$ it has the same vertices and sides as $Q_{i, i+\alpha}$, and between $S_{i+\alpha}$ and $S_{j}$ it has the same vertices and sides as $Q_{i+\alpha, j}$. Its area is $A_{ij}=A_{i, i+\alpha}+A_{i+\alpha, j}-\area(P)$.
By construction, the convexity of $Q_{i, i+\alpha}$ and $Q_{i+\alpha, j}$ implies that $Q_{ij}$ is convex as well. Since $Q_{ij}$ can be of any Type ($\alpha$), this step requires $O(k)$-time.

For each fixed $k$, starting with $k=3$, we execute the above procedure for all $1\leq i \leq n$. Then we increase the value of $k$ by one and repeat all steps until $k=n$. We obtain the values of $A_{ij}(P)$ for all $i,j$ with $i<j\leq i+n$. This is done in $O(n^3)$-time. Indeed, let $k$ be fixed, $3\leq k \leq n$. Only the case $j=i+k$ is unknown. $Q$ can be of any type, by above all types require $O(k)$-time. Executing this for all $i$, $1\leq i \leq n$, requires $O(kn)$-time. Now we have $A_{ij}$ for $i<j\leq i+k, 1\leq i \leq n$ with $A_{is} \leq A_{il}$ for $s<l$. Hence it remains to take the maximum of $A_{i(i+k)}$ over $i$, which requires $O(n)$-time. Thus for a fixed $k$, the algorithm needs $O(kn)$-time. Summing over $k$, we obtain the claimed $O(n^3)$-time.

Once we have $A_{i(i+n)}(P)$ for all $i$, we can calculate the maximum area $A_{\ge 1}(P)$ of a convex polygon circumscribed about $P$ containing at least one side of $P$. $A_{\geq 1}(P)=\max \{ A_{i(i+n)}\ :\ 1\le i\le n \}$.

Denote by $A' = A'(P)$ the maximum area of a convex polygon circumscribed about $P$ containing none of the sides of $P$. By Remark \ref{rem:lengthofonestep}, for even $n$, if there exists a convex solution containing none of the sides of $P$, then there is a solution containing one side of $P$ with the same area, hence $A'\leq  A_{\geq 1}$. However, if we would like to list all maximum area polygons circumscribed about $P$, we need to execute this step also in case $n$ is even. For an odd $n$, the existence of a convex solution, the solutions and their area can be computed in $O(n)$-time, see (\ref{thm:Pmidpointprop}.1)
and Remark \ref{rem:lengthofonestep}. For an even $n$, all convex solutions can be found in $O(n \log n)$-time, see (\ref{thm:Pmidpointprop}.3) and Remark \ref{rem:lengthofonestep}.

Finally, $A(P)=\max\{ A'(P), A_{\ge 1}(P)\}$, so we get the final answer in $O(n^3)$ time. It is clear that we can keep track
of the best circumscribed polygons of different types throughout the algorithm. Hence, in addition to  $A(P)$, we also get the
polygons $Q$ realizing it.


\section{Combinatorial properties of maximum area circumscribed polygons}\label{sec:sequences}

Let $Q$ be a 
convex polygon circumscribed about $P$. Recall that a side of $P$ is called \emph{used},
if it lies on $\bd (Q)$, and \emph{not used} otherwise.
Thus to any such $Q$, we associate an $n$-element {\em characteristic sequence}
$s=s(P, Q)$ of $U$s and $N$s
in such a way that the $i$th element of this sequence is $U$ if $S_i$
is used and $N$ otherwise. In particular, for each value of $i$, $s$
determines whether the condition (a) or (b) of Theorem \ref{thm:characterization}
is satisfied for $p_i$.


Our aim is to determine which sequences $s\in\{U,N\}^n$ are {\em realizable}; that is,
which sequences can appear for as a characteristic sequence for some
$P$ and a maximum area convex polygon $Q$ circumscribed about $P$.

In the previous version of this paper \cite{ADLT} we had an almost complete characterization.
However, very recently, N. Bonneel \cite{B24} pointed out that one of the technical 
statements (part (ii) in the proof of Theorem 4)
would contradict an old  result of Zaremba \cite{Zaremba}.
Indeed, that statement turned out to be  false. Here we present a corrected version of 
Theorem~\ref{thm:structure},
which gives a complete characterization.
%
We formulate the theorem for \emph{cyclic sequences}, in which the indices of the elements
are meant mod $n$; that is, in which we regard the first and the
last elements as consecutive. We denote the family of cyclic sequences
consisting of $n$ $N$s and $U$s by $\{U,N\}^n_c$.

\begin{thm}\label{thm:structure}
  (a) Let $s\in\{U,N\}^k$. It is a (contiguous) subsequence of a realizable characteristic sequence
  $s'\in\{U,N\}^n$ for some $n \ge k$ if and only if $s$ does not contain three consecutive $U$s.

(b) A sequence $s\in\{U,N\}^n_c$ with $n \geq 5$ is realizable if and only if the following holds.
\begin{itemize}
\item[(i)] $s$ does not contain three consecutive $U$s,
\item[(ii)] $s$ contains at least two disjoint subsequences of $N$s separated by some $U$s,
\item[(iii)] $s$ is not of the form $UNUN\ldots N$, and not one of $UNNUNN$ and $UNNUNNN$.
\end{itemize}
\end{thm}

\begin{proof}
For any $x,y,z \in \Re^2$, we denote the triangle $\conv \{ x,y,z\}$ by $[x,y,z]$ and its area by $A(x,y,z)$.

First observe that no realizable sequence contains three consecutive $U$s.
Indeed, in this case adding  to $Q$ the triangle bounded by these three
sidelines strictly increases the area of $Q$ while preserving its convexity.
This proves one direction of the statement in (a). The opposite direction
follows from the statement in (b).

Suppose that we have a convex polygonal curve
  $\Gamma_i = p_0p_1\ldots p_i$ and two halflines, $L$ and $L'$,
starting at $p_0$ and $p_i$, respectively.

For a polygonal
  curve 
  $\Delta_i = q_1q_2\ldots q_{i}$ {\em property $(*)$} is the following:

\smallskip

\noindent
  \centerline{property $(*)$:  $p_0q_1q_2\ldots q_{i}p_i$ is a convex polygon, 
  $q_1 \in L$, $q_{i} \in L'$ and $p_j \in q_jq_{j+1}$ for
  $1\le j\le i-1$.}

\smallskip
  
The rest of proof is based on the following four technical statements about
realizing a long sequence of $N$s as a subsequence.


\begin{itemize}
\item[(i)] For any $i \geq 1$ there exists a convex polygonal curve
  $\Gamma_i = p_0p_1\ldots p_i$ and two half-lines, $L$ and $L'$,
  starting at $p_0$ and $p_i$, respectively, such that the triangle $T_i$,
  bounded by $L$, $L'$ and $p_0p_i$, contains $\Gamma$
  and the following property is satisfied.

  Let $\Delta_i = q_1q_2\ldots q_{i}$ be a 
  convex polygonal
  curve with property $(*)$ that maximizes the area  of $p_0q_1q_2\ldots q_ip_i$.
Then $q_1q_2\ldots q_i$ satisfies the midpoint property; that is,
  $p_j$ is the midpoint of $q_jq_{j+1}$ for $1\le j\le i-1$ (cf. Figure~\ref{fig:case1}).
                                                                             
\begin{figure}
\begin{centering}
\includegraphics[width=0.5\textwidth]{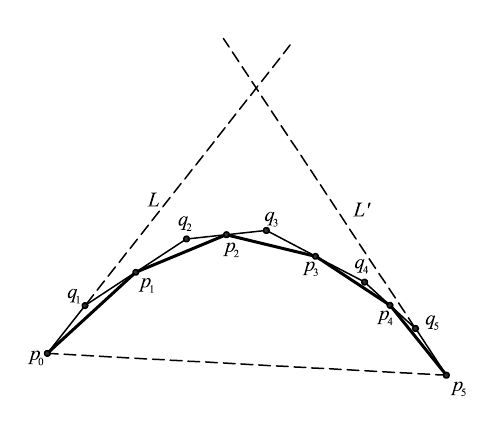}
\caption[]{A `long' subsequence containing only Ns as in (i)}
\label{fig:case1}
\end{centering}
\end{figure}

\item[(ii)] For any $i \geq 2$ there exists a convex polygonal curve $\Gamma_i = p_0p_1\ldots p_i$
  and parallel half-lines $L,L'$ starting at $p_0$ and $p_i$, respectively, such that 
  $p_0p_1\ldots p_i\subset \conv (L \cup L' )$,  
  and the following property is satisfied.

  There is a polygonal curve $\Delta_i = q_1q_2\ldots q_{i}$ with property $(*)$ which 
  satisfies the midpoint property; that is, $p_j$ is the midpoint of $q_jq_{j+1}$ for $1\le j\le i-1$, and $\Delta_i$ maximizes the area of $p_0q_1q_2\ldots q_ip_i$ among the polygonal curves with property $(*)$ (see Figure~\ref{fig:equaltopi}).

\item[(iii)] For any $i \geq 3$ there exists a convex polygonal curve $\Gamma_i = p_0p_1\ldots p_i$
  and parallel half-lines $L,L'$ starting at $p_0$ and $p_i$, respectively, such that 
  $p_0p_1\ldots p_i\subset \conv(L \cup L')$,  
  and the following property is satisfied.

There is a polygonal curve $\Delta_i = q_1q_2\ldots q_{i}$ with property $(*)$ which 
  satisfies the midpoint property; that is, $p_j$ is the midpoint of $q_jq_{j+1}$ for $1\le j\le i-1$, and $\Delta_i$ maximizes the area of $p_0q_1q_2\ldots q_ip_i$ among the polygonal curves with property $(*)$. Furthermore, there is no polygonal curve $\Delta_i = q_1q_2\ldots q_{i}$ of property (*) that maximizes the area of $p_0q_1q_2\ldots q_ip_i$, and $q_i = p_i$.

 \item[(iv)] For any $i \geq 4$ there exists a convex polygonal curve $\Gamma_i = p_0p_1\ldots p_i$
  and parallel half-lines $L,L'$ starting at $p_0$ and $p_i$, respectively, such that 
  $p_0p_1\ldots p_i\subset \conv (L \cup L')$,  
  and the following property is satisfied.

There is a polygonal curve $\Delta_i = q_1q_2\ldots q_{i}$ with property $(*)$ which 
  satisfies the midpoint property; that is, $p_j$ is the midpoint of $q_jq_{j+1}$ for $1\le j\le i-1$, and $\Delta_i$ maximizes the area of $p_0q_1q_2\ldots q_ip_i$ among the polygonal curves with property $(*)$. there is no polygonal curve $\Delta_i = q_1q_2\ldots q_{i}$ of property (*) that maximizes the area of $p_0q_1q_2\ldots q_ip_i$, and $p_0 = q_1$ or $q_i = p_i$.


$q_1 \neq p_0$ and $q_i \neq p_i$.

\end{itemize}

\begin{figure}
\begin{centering}
\includegraphics[width=0.5\textwidth]{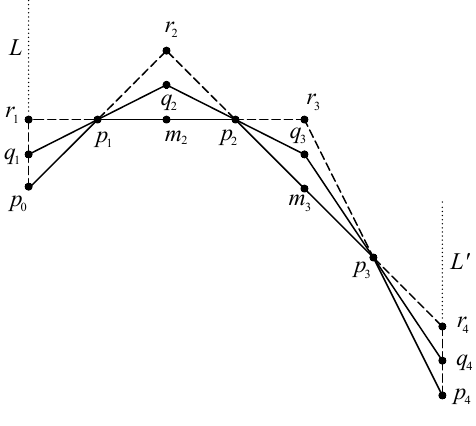}
\caption[]{An illustration of (ii) for the case $i=4$}
\label{fig:equaltopi}
\end{centering}
\end{figure}

First, we prove (i). Since our statement is trivial if $i=1$ or $i=2$,
we prove it first for $i=3$.
Consider an isosceles triangle $[p_1,p_2,y_2]$ with base $p_1p_2$. Let $x_2$
be an arbitrary interior point of $p_1p_2$, and $q_2$ be an arbitrary point of
$y_2x_2$, very close to $y_2$. Reflect the points $y_2, q_2$ and $x_2$ about $p_1$
to obtain the points $p_0, q_1, y_1$, respectively.
Reflect also the points $y_2, q_2$ and $x_2$
about $p_2$ to obtain $p'_3$, $q_3$ and $y'_3$, respectively (see  Figure~\ref{fig:lessthanpi_1}).
Then we clearly have $A(p_1,p_2,y_2) = A(p_0,p_1,y_1)+A(p_2,p'_3,y'_3) =
A(p_0,p_1,q_1)+A(p_1,p_2,q_2)+A(p_2,p'_3,q_3)$.
Now slightly rotate the line of $p'_3y'_3$ about $q_3$ so that it intersects
$p_2y'_3$ at an interior point $y_3$. Let $p_3$ be the intersection point of
this rotated line and the line of $p_2p'_3$.
By the idea of the proof of Theorem~\ref{thm:characterization},
we have
$A(p_0,y_1,p_1)+A(p_2,p_3,y_3) < A(p_1,p_2,y_2) < \sum_{j=1}^3 A(p_{j-1},q_j,p_j)$.
Furthermore, if $q_2$ is sufficiently close to $y_2$,
then for any $q'_1 \in p_0y_1$ and $q'_2 \in y_2p_2$,
if $p_1$ is the midpoint of
$q'_1q'_2$, then $A(p_0,q'_1,p_1)+A(p_1,q'_2,p_2) < \sum_{j=1}^3 A(p_{j-1},q_j,p_j)$,
and a similar statement holds if we choose points
$q'_2 \in p_1y_2$ and $q'_3 \in p_3y_3$ in the same way.
Set $\Gamma_3 = p_0p_{1}p_2p_3$, let $L$ be the half-line containing
$p_0q_1$ and starting at $p_0$, and  let $L'$ be
the half-line containing
$p_3q_3$ and starting at $p_3$.
Then the conditions of (i) are satisfied.

\begin{figure}
\begin{centering}
\includegraphics[width=0.5\textwidth]{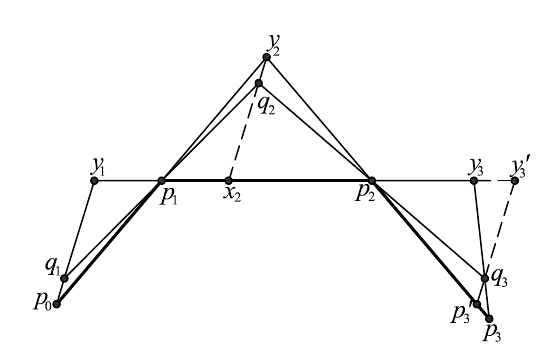}
\caption[]{An illustration of the proof of (i) for the case $i=3$}
\label{fig:lessthanpi_1}
\end{centering}
\end{figure}

Now, we show how to modify this construction for larger values of $i$.
We start with the configuration in the last paragraph.
Let $\Delta_3$ be the curve that satisfies the conditions in (i). Rotate the line through
$y_3p_3$ around $q_3$ by a very small angle
such that the rotated line intersects $p_2p_3$ at an interior
point $p''_3$. Let this line intersect the line through
$p_2y_3$ at $y''_3$. Reflect the points $y''_3, q_3, p''_3$
about $p_3$ to obtain the points $p_4,q_4,y_4$, respectively (cf. Figure~\ref{fig:lessthanpi_2}).
Note that as the angle of rotation tends to zero,
for the convex polygonal curve $\Delta_4=q_1q_2q_3q_4$
satisfying the conditions, the initial part $q_1q_2q_3$ will get arbitrarily
close to $\Delta_3$.
Since for a very small rotation angle
$A(p_3,q_4,p_4) > 0$, $\Delta_4=q_1q_2q_3q_4$ does not use any of the sides,
so it satisfies the conditions.
We can proceed similarly and extend our construction for any $i$.

\begin{figure}
\begin{centering}
\includegraphics[width=0.5\textwidth]{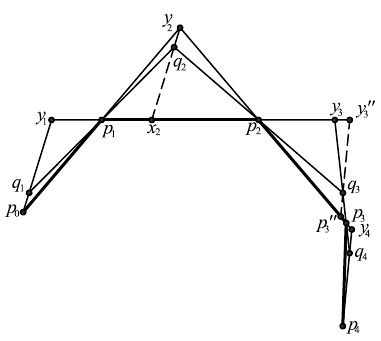}
\caption[]{An illustration of the proof of (i) for the case $i=4$}
\label{fig:lessthanpi_2}
\end{centering}
\end{figure}

Next, we prove (ii). Let $p_0=(0,0)$, $r_1 = (0,1)$ and $p_1 =(1,1)$.
Define $r_2$ and $m_2$ as the reflections of $p_0$ and $r_1$ about $p_1$,
respectively; that is, $r_2=2p_1-p_0=(2,2)$ and $m_2=2p_1-r_1=(2,1)$.
Define $p_2$ as the reflection $p_2=2m_2-p_1=(3,1)$ of $p_1$ about $m_2$.
For $3 \leq j \leq i$, let $m_j$ and $r_j$ be the reflections of $r_{j-1}$ resp. $m_{j-1}$
about $p_{j-1}$, and  for $3 \leq j \leq i-1$ let  $p_j$ be the reflection of $p_{j-1}$ about $m_j$.
Finally, we let $p_i=m_i$ and $L'$ as the half-line starting at $p_i$ and passing through $r_i$
(see Figure~\ref{fig:equaltopi}).


Let $q_0q_1\ldots q_i$ be a polygonal curve with property $(*)$, that is, 
$Q_i= p_0q_0q_1\ldots q_ip_i$ is convex, $q_0 \in L$, $q_i \in L'$, 
and $p_j \in q_jq_{j+1}$, $1\le j\le i-1$. Assume that the area of $pq_0q_1\ldots q_ip_i$ is maximum under these conditions.

We allow the angles of this polygon to be equal to $\pi$, or in other words,
some sides of $p_0p_1\ldots p_i$  might be used. 
%
%
Our key observation is that for any $1 \leq j \leq i-2$, we have 
\begin{equation}\label{eq:keyobs}
2|r_{j-1}p_{j-1}| = |p_{j-1}p_j| = 2|p_jr_{j+1}|.
\end{equation}

First, we show that $Q_i$ does not contain two consecutive sides of $p_0p_1\ldots p_i$.
Indeed, suppose for contradiction that $Q_i$ contains the sides $p_{j-1}p_j$ and $p_jp_{j+1}$
for some $1 \leq j \leq i-1$. Then we have $q_j=p_j$ or $q_{j+1}=p_j$.
Suppose that $q_j=p_j$, the other case is analogous.
Then $2 \leq j$ and $q_{j-1} \in r_{j-1}p_{j-1}$.
Define the convex polygon $Q_i^*$ by replacing the vertices
$q_{j-1},q_j$ of $Q_i$ by the points $q_{j-1}^*,q_j^*$ ,
respectively, as follows: we slide $q_j$ a little bit on the line of
$p_jp_{j+1}$ to the direction $r_{j-1}$
to obtain
the point $q_j^*$. Then we choose the point $q_{j-1}^*$ at the intersection of 
$q_{j-1}q_{j-2}$ and the line through $q_j^*p_{j-1}$.
Then, by (\ref{eq:keyobs}), the area of $Q_i$ is strictly smaller than that of $Q_i^*$,
which shows the contradiction.

Now we consider the case that $Q_i$ uses a side $p_{j-1}p_j$ for some $3 \leq j \leq i-2$,
but it does not use the sides $p_{j-2}p_{j-1}$ and $p_jp_{j+1}$.
Then we may assume that $q_j$ lies in the interior of $p_{j-1}p_j$,
and we also have $q_{j-1} \in r_{j-1}p_{j-1}$ and $q_{j+1} \in p_jr_{j+1}$.
As in the previous paragraph, we modify $Q_i$ to obtain a convex polygon $Q_i^*$.
Replace the points $q_{j-1},q_j,q_{j+1}$ by points $q_{j-1}^*,q_j^*,q_{j+1}^*$, respectively,
defined as follows: we slightly move $q_j$ vertically upward to obtain the point $q_j^*$.
Let $q_{j-1}^*$ be  the intersection of $q_{j-1}q_{j-2}$ and the line through $q_j^*p_{j-1}$, and let
 $q_{j+1}^*$ be  the intersection of $q_{j+1}q_{j+2}$ and the line through $q_j^*p_{j}$.
Note that if $q_{j-1} \neq r_{j-1}$ or $q_{j+1} \neq r_{j+1}$, then by (\ref{eq:keyobs})
we have that the area of $Q_i^*$ is strictly greater than that of $Q_i$.
Thus, by the choice of $Q_i$, we have $q_{j-1} =r_{j-1}$ and $q_{j+1} = r_{j+1}$.
From this, we obtain that $Q_i$ uses the sides $p_{j-3}p_{j-2}$ and $p_{j+1}p_{j+2}$ as well.
A straightforward modification of our argument yields a similar statement if $j=1,2,i-1,i$,
implying that either $Q_i'$ uses no side $p_{j-1}p_j$, or it uses every second side.

Our considerations allow us to characterize the polygons $Q_i$ of maximum area.
Indeed, by Theorem~\ref{thm:characterization}, these are obtained by picking an arbitrary point $q_1 \in p_0r_1$, and we define $q_j$ for $2  \leq j \leq i$ subsequently as the reflected copy of $q_{j-1}$ about $p_{j-1}$. We note that then $q_i \in p_ir_i$, and $Q_i$ uses no side if and only if $q_1$ lies in the interior of $p_0r_1$.

\begin{figure}
\begin{centering}
\includegraphics[width=0.5\textwidth]{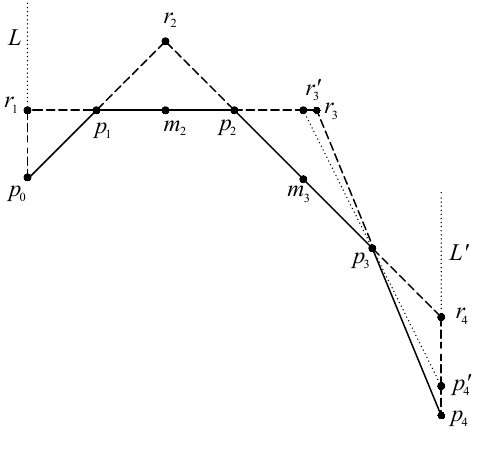}
\caption[]{An illustration of the proof of (iii) for the case $i=4$}
\label{fig:equaltopi_2}
\end{centering}
\end{figure}

To prove (iii) and (iv), we slightly modify the construction in (ii).
First we prove (iii). Here, after defining $L$ and the points $m_j, r_j, p_j$ as in the previous example,
with a little abuse of notation, we relabel the points
$p_i, r_{i-1}$ as $p_i'$ and $r_{i-1}'$, respectively.
We set $p_i=p_i'-(0,\varepsilon)$ for some small value $\varepsilon > 0$,
and define $r_{i-1}$ as the intersection point of the lines through $p_{i-1}p_i$ and $p_{i-3}p_{i-2}$.
Finally, we set $L'=p_i+L$ (see Figure~\ref{fig:equaltopi_2}). Applying a consideration as in the original construction we obtain the following:
\begin{itemize}
\item $p_{i-1}p_i$ is not used;
\item for $j=1,2, \ldots,i-3,i-1$, if $p_{j-1}p_j$ is used, then $p_{j-3}p_{j-2}$ and $p_{j+1}p_{j+2}$ are used, if they exist;
\item if $p_{i-3}p_{i-2}$ is used, then we have $q_{i-1} \in r_{i-1}' r_{i-1}$.
\end{itemize}
We also observe that if $p_{i-3}p_{i-2}$ is used and $q_{i-1}' \neq r_{i-1}'$,
then slightly rotating the sideline of $Q_i$ through $p_{i-1}$
in counterclockwise direction increases the area of $Q_i$,
which leads to a contradiction.
Thus, the property that $p_{i-3}p_{i-2}$ is used implies that $q_{i-1} = r_{i-1}'$,
yielding also $q_{i-3}=r_{i-3}$ and the property that $p_{i-5}p_{i-4}$ is used, if it exists.
By these properties, we can characterize the polygons $Q_i$ of maximal area as in the previous case:
we pick an arbitrary point $q_1 \in p_0r_1$, and define $q_j$ for $2  \leq j \leq i$ subsequently as the reflected copy of $q_{j-1}$ about $p_{j-1}$. Nevertheless, in this case $q_i \in p_i'r_i$, therefore,  $q_i \neq p_i$.

Finally, to prove (iv) we can apply the modification in the proof of (iii) not only for $p_i$ and $r_{i-1}$, but also for $p_0$ and $r_2$ in the same way.

\begin{rem}\label{rem:iisnot3}
We note that using similar arguments, one can show the following:
For any polygonal curve $\Gamma_i = p_0p_1p_2p_3$ and parallel half-lines $L,L'$ starting at $p_0$ and $p_3$, respectively, so that the polygon $p_0p_1\ldots p_i$ is convex and is contained in the convex hull of $L,L'$, if there is a convex polygon $p_0q_1q_2q_3p_3$ of maximal area under the constraints $q_0 \in L$, $q_3 \in L'$, $p_1 \in q_1q_2$, $p_2 \in q_2q_3$ that does not use the sides $p_0p_1$, $p_1p_2$, $p_2p_3$, then there is a convex polygon $p_0q_1'q_2'q_3'p_3$ of maximal area satisfying the same contraints which uses at least one of the sides $p_0p_1$, $p_2p_3$.
\end{rem}

Next, we show how part (b) of Theorem~\ref{thm:structure} follows from the previous constructions. Decompose $s$ into $k$ consecutive subsequences $n_1, n_2, \ldots, n_k$ consisting only of $N$s that are separated by either $U$ or by $UU$.

If $k \geq 3$, we use only the curve $\Gamma_i$ from (i). Let $\bar{P}_k= r_0 r_1\ldots r_k$, where $r_0 = r_k$, be a regular $k$-gon.
Sides $r_ir_{i+1}$ of $\bar{P}_k$ will be called {\em old sides}.
Now we add one or two very small sides, called {\em new sides} at each vertex of $\bar{P}_k$,
according to the number of $U$s separating $n_{m-1}$ and $n_m$.
When we add one new side at $r_i$, we let it have the same angle with the two consecutive old sides. When we add two, we allow them to have almost the same angle with the two consecutive old sides such that one of them is much shorter than the other one. Let  $\bar{P}$ denote the resulting polygon.
Let $\bar{T}_m$ be the external triangle bounded by the $m$th old side and the lines of the adjacent new sides.
If $n_m$ consists of $i$ $N$s, let $h_m$ be the affine transformation such that the image of the triangle $T_i$ in (i) is
$h_m(T_i)=\bar{T}_m$, and the image of $p_0,p_i$ is the $m$th old side of $\bar{P}$. Then, replace this side of $\bar{P}$ by $h_m(\Gamma_i)$ for all values of $m$. We obtain a convex polygon $P$. If the new sides are sufficiently small and one of them is much shorter than the other one, then for any maximum area polygon circumscribed about $P$ these sides are used, since in the opposite case the midpoint property cannot be satisfied. On the other hand, from the construction of $\Gamma_i$ it follows that no other sides of $P$ are used. If $k=2$, and at least one $UU$ is used to separate them, we can apply a slight modification of this argument.

Consider the case that $k=2$ and the two subsequences consisting only of $N$s, of lengths $n_1$ and $n_2$, respectively, are separated by $U$ and $U$.
First, consider the case that one of $n_1, n_2$, say $n_1$, is at least $4$, and $n_2 \geq 2$.
Let $p\bar{p}$ and $p'\bar{p}'$ be two parallel, sufficiently small segments of length $\delta$, satisfying $p-\bar{p}=p'-\bar{p}'$. Let $S$ be the infinite strip bounded by the lines through $p\bar{p}$ and $p'\bar{p}'$. Let $R$ and $\bar{R}$ denote the closures of the two components of $S \setminus p\bar{p}\bar{p'}p'$ such that $pp' \subset R$ and $\bar{p}\bar{p}' \subset R'$. Let $h$ denote the affine transformation satisfying $h(\conv (L \cup L'))=R$ where $L,L'$ are the half-lines in (iv), with $i=n_1$. Similarly, we define $\bar{h}$ as the affine transformation satisfying $h(\conv (L \cup L'))=\bar{R}$, where $L,L'$ are the half-lines in (ii) with $i=n_2$.
Let $P(\delta)$ be the polygon bounded by $h(\Gamma_{n_1})$, $\bar{h}(\Gamma_{n_2})$, $p\bar{p}$ and $p'\bar{p}'$.

We show that if $\delta$ is sufficiently small, the convex hull $Q(\delta)$ of $h(\Delta_{n_1})$ and $\bar{h}(\Delta_{n_2})$ is a maximal area polygon circumscribed about $P$. Indeed, consider a maximal area polygon $Q'(\delta)$ circumscribed about $P$. Suppose for contradiction that for some $\delta_k \to 0$, $Q'(\delta_k)$ does not use one of $p\bar{p}$ and $p'\bar{p}'$, say $p\bar{p}$. Then the limit of the area of $Q' \cap R$ is strictly smaller than the area of $\conv (h(\Delta_{n_1}) \cup pp')$, and the limit of the area of $Q' \cap \bar{R}$ is at most the area of $\conv (h(\Delta_{n_2}) \cup pp')$. Since the areas of the external triangles of $P(\delta)$ containing $p\bar{p}$ and $p'\bar{p}'$ tend to zero as $\delta \to 0$, we obtain that the limit of $\area(Q'(\delta))$ is strictly less than $\area(Q)$; a contradiction. Thus, if $\delta$ is sufficiently small, $Q'(\delta)$ uses both $p\bar{p}$ and $p'\bar{p}'$. From this, by the conditions in (ii) and (iv), we obtain that $\area(Q'(\delta))=\area(Q(\delta))$, and thus, $Q(\delta)$ is a maximal area circumscribed polygon.

If $n_1=n_2=3$, we can use a slight modification of the above construction.

Now we show that no other cyclic sequence can be realized.

For the case $k=1$, we apply the following result of Zaremba  \cite{Zaremba}.
Let $P=p_1p_2\ldots p_n$ be a convex polygon, $Q$ maximum area a convex polygon circumscribed about $P$.
Suppose that $j<i$, $p_jp_{j+1}$ and $p_ip_{i+1}$ are used, but no other side is used between them.
Then the turning angle between  $p_jp_{j+1}$ and $p_ip_{i+1}$, that is, the clockwise angle determined by vectors $p_jp_{j+1}$ and $p_ip_{i+1}$,
is at most $\pi$.



If $k=1$, then there is only one $U$, or two consecutive $U$s.
But the total turning angle of a convex polygon $P$ at all but one of its vertices is strictly greater than $\pi$,
so this sequence is not realizable.


We are left with the cases $s=UNNUNN, UNNUNNN$. We show that $UNNUNNN$ is not realizable, as in the other case a simplified variant of our argument can be applied.
Suppose for contradiction that there is a convex polygon $P=p_1p_2\ldots p_7$ and a maximum area circumscribed polygon $Q$ that uses only the sides $p_7p_1$ and $p_4p_5$. Then, by the result of Zaremba, $p_1p_7$ and $p_4p_5$ are parallel. Let the lines through these two segments be $L$ and $L'$, respectively. Let $Q=q_1q_2q_3q_4q_5$, where $p_1p_7 \subseteq q_1q_5 \subset L$ and $p_4p_5 \subseteq q_3q_4 \subset L'$. Then, by Remark~\ref{rem:iisnot3}, there are points $q_1'q_2'q_3'$ with $q_1' \in L$, $p_2 \in q_1'q_2'$, $p_3 \in q_2'q_3'$ and $q_3' \in L'$ such that $\area(p_1q_1'q_2'q_3'p_4)=\area(p_1q_1q_2q_3p_4)$, and $q_1'=p_1$ or $q_3'=p_3$. Without loss of generality, we may assume that $q_1'=p_1$. On the other hand, our conditions imply that $p_6$ is the midpoint of $q_4q_5$, and thus, if $q_4'$ denotes the intersection of $L$ with the line through $p_6p_7$, then $\area(p_5q_4q_5p_7)=\area(p_5q_4'p_7)$. Thus, $Q'=p_1q_2'q_3'q_4'p_7$ is a maximal area polygon circumscribed about $P$. On the other hand, $Q'$ uses three consecutive sides of $P$, namely $p_6p_7,p_7p_1,p_1p_2$, a contradiction.
\end{proof}

\begin{rem}\label{rem:regular_ngon}
Let $P$ be a regular $n$-gon with unit circumradius, where
$n \geq 5$. Then an
elementary (but tedious) computation yields the following.

\begin{enumerate}
\item If $2 | n$, then $A(P) = n \tan \frac{\pi}{n} + \frac{n}{2} \frac{\sin^4
  \frac{\pi}{n}}{\cos^2 \frac{\pi}{n}} \tan \frac{2\pi}{n}$,  and the sequence
  assigned to a maximum area polygon circumscribed about $P$ is
  $UNUN\ldots UN$.
\item If $4 | (n-1)$, then $A(P) = n \tan \frac{\pi}{n} + \frac{\sin^4
  \frac{\pi}{n}}{\cos^2 \frac{\pi}{n}} \left( \frac{n+3}{4} \tan
  \frac{2\pi}{n}-\tan \frac{3\pi}{2n} \right)$, and the sequence assigned to
  a maximum area polygon circumscribed about $P$ is $U\overbrace{NN\ldots
    N}^{\frac{n-5}{2}} UNUN\ldots UN$.
\item If $4 | (n+1)$, then $A(P) = n \tan \frac{\pi}{n} + \frac{\sin^4
  \frac{\pi}{n}}{\cos^2 \frac{\pi}{n}} \left( \frac{n+1}{4} \tan
  \frac{2\pi}{n}-\tan \frac{\pi}{2n} \right)$, and the sequence assigned to
  a maximum area polygon circumscribed about $P$ is $U\overbrace{NN\ldots
    N}^{\frac{n-3}{2}} UNUN\ldots UN$.
\end{enumerate}
\end{rem}


\section{An application to statistics}\label{sec:statistics}

The above content has a connection to the \emph{Gini index},
a measure originally used in economics, statistics,
and nowadays being used in many applications, see \cite{Farris} for a very nice introduction to this subject.

In economics, the {\em Lorenz curve} is a representation of
the distribution of wealth, income, or some other parameter.
For a population of size $n$, with values (say, wealth) $x_i$ in increasing order,
$F_i=i/n$, $S_i=\sum_{j=1}^ix_j$, and $L_i=S_i/S_n$. Then the function $L(F):$
$F_i\longrightarrow L_i$
is the Lorenz curve of the given distribution. That is, $L_i$ is the relative share of the poorest
$i/n$ part of the population from the total wealth.

In general, for $0\le\alpha\le 1$, let $x_{\alpha}$ denote the $\alpha$-quantile of
a distribution, that is, exactly $\alpha$ portion of the population has
wealth less than $x_{\alpha}$. Let
$\bar{x}$ be the mean of the distribution.
Then the Lorenz curve is the function $L(p) := \tfrac{1}{\bar{x}} \int_0^p x_\alpha \ d\alpha$ for $0\le p\le 1$.
Clearly, $L(0)=0$ and $L(1)=1$ and $L$ is always a convex function. $L(p)=p$ for every $p$
iff everybody has exactly the same wealth.

The functional
\begin{align*} \label{def:gini}
\mathcal{G} (L) = 2 \int_0^1 \left( p - L(p) \right) dp
%
%
%
\end{align*}
is called the Gini coefficient. It measures the relative area
between a neutral scenario and the observed scenario (See figure ~\ref{fig:gini}).
(More precisely, twice the area
between a neutral scenario and the observed scenario divided by the area under the curve for the
neutral scenario.)
It is $0$ in case of ``perfect equality'' (everybody has the same wealth)  and (almost) $1$ in case of ``perfect inequality''
(one person has all the wealth).

\begin{figure}[htp]
\centering
\begin{subfigure}{.45\textwidth}
\centering
\begin{tikzpicture}[domain=-1:11, scale=.4]
\draw (-0.5,0)--(10.5,0) node[above] {$p$}(0,-0.5)--(0,10.5) node[right] {$L(p)$}; 
\fill[red!40] (0,0)--(1,0.1)--(2,0.4)--(3,0.9)--(4,1.6)--(5,2.5)--(6,3.6)--(7,4.9)--(8,6.4)--(9,8.1)--(10,10)--(0,0);
\draw (0,0)--(10,10);
\draw (7,8) node[left, rotate=47] {neutral line  };
\draw  (0,0)--(1,0.1)--(2,0.4)--(3,0.9)--(4,1.6)--(5,2.5)--(6,3.6)--(7,4.9)--(8,6.4)--(9,8.1)--(10,10);
\draw node[left,rotate=47] at (7,6) {$\frac{1}{2}$ Gini index} node[right, rotate=47] at (4,0.5) {Lorenz curve};
\end{tikzpicture}
\end{subfigure}
\begin{subfigure}{.45\textwidth}
\centering
\begin{tikzpicture}[domain=-1:11, scale=.4]

\fill[green!60] (0,0)--(4,2)--(2,0)--cycle (4,2)--(8,6)--(20/3,10/3)--cycle (8,6)--(10,10)--(10,8)--cycle; 
\fill[gray!10] (0,0)--(4,2)--(8,6)--(10,10)--cycle; 
\draw[thick] (0,0)--(4,2)--(8,6)--(10,10)--(0,0);
\fill (0,0) circle (1ex) (4,2) circle (1ex) (8,6) circle (1ex) (10,10) circle(1ex); 
\draw (-0.5,0)--(10.5,0) node[above] {$p$}(0,-0.5)--(0,10.5) node[right] {$L(p)$}; 
\end{tikzpicture}

\end{subfigure}
\caption[]{Gini index from a known (left) and unknown (right) Lorenz curve}
\label{fig:gini}
\end{figure}

We defined the Lorenz curve as a continuous curve. In practice often only points on the Lorenz curve are known.

\begin{rem}
Data for every individual is often not available and only data for groups is accessible.
From that data only points of the Lorenz curve can be reconstructed.
\end{rem}

\begin{rem}
In credit modeling, banks group their clients in $n$ rating groups.
After twelve months they see which clients could not pay back their
loans and the Lorenz curve is then taken as the percentage of defaults
in the worst $i$ groups. A high Gini coefficient indicates that the bank
succeeded in discrimating safe clients from dangerous clients.
\end{rem}

For both cases above, the real Gini coefficient is not known and upper
and lower bounds are of interest. By the convexity of a Lorenz curve the best lower bound is attained
by the polygonal curve obtained by connecting the known points on the Lorenz curve.
On the other hand, whereas it is easy to see that any maximal area convex
curve must be piecewise linear, it does not seem easy to find the best
upper bound corresponding to any given point set. A conjecture related to
this problem, made by Farris \cite{Farris}, states that the maximal value
is attained at a convex polygonal curve with the property that each side of
it lies on a sideline of the polygonal curve connecting the given points, or,
using our terminology, no sequence associated to any polygonal curve contains
consecutive $N$s.

Our results imply that Farris' conjecture does not hold. As a specific
counterexample, we may take the part of a regular $n$-gon $P_n$ with $8 | (n-4)$, centered at
$(0,1)$, with a vertex at $(0,0)$ and contained in the unit square $[0,1]^2$ (cf. Figure~\ref{fig:Farris}). From the computations proving Remark~\ref{rem:regular_ngon} it is
easy to see that in this case the optimal circumscribed polygonal curve does not
use any sides of $P_n$. Equivalently, using the idea of the proof of
Theorem~\ref{thm:structure}, we may construct counterexamples assigned to `almost all' sequences of $U$s and $N$s.

\begin{figure}
\begin{centering}
\includegraphics[width=0.4\textwidth]{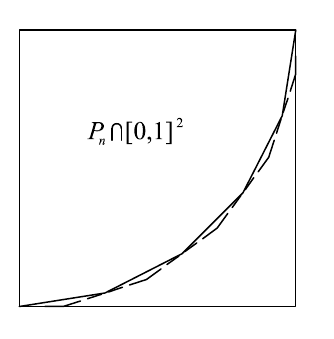}
\caption[]{A counterexample to Farris' conjecture: the dashed line is the maximum area convex curve circumscribed about the part of $P_n$ in the unit square $[0,1]^2$ with $n=20$}
\label{fig:Farris}
\end{centering}
\end{figure}

Moreover, our algorithm from Section \ref{sec:algorithm} provides an efficient way to find the best upper bound for the Gini index for any given points of the Lorenz curve.

\section{Remarks and questions}\label{sec:remarks}


We strongly suspect that the $O(n^3)$ running time of our algorithm in Section~\ref{sec:algorithm} is far from optimal.
On the other hand, the best lower bound we can prove is linear, which is trivial.

\begin{prob}
Find the optimal running time of an algorithm that determines the area of a maximum area convex polygon circumscribed about any given convex polygon.
\end{prob}


It is also natural to investigate the following higher-dimensional generalization of our problem.

\begin{prob}\label{prob:higherdimension1}
Given a convex polytope $P$ in a Euclidean $d$-space, find the maximum volume
convex polytopes $Q$ with the property that
each vertex of $P$ lies on the boundary of $Q$.
%
\end{prob}

Or in a more general version:

\begin{prob}\label{prob:higherdimension2}
Given a convex polytope $P$ in an Euclidean $d$-space, find the maximum volume
convex polytopes $Q$ with the property that
each $k$-face of $P$ lies on the boundary of $Q$.
\end{prob}

We note that a generalization of Theorem~\ref{thm:characterization} to Problems
\ref{prob:higherdimension1} and \ref{prob:higherdimension2}
can be proved easily.

\begin{thm}\label{thm:ddim}
Let $P$ be a convex polytope in Euclidean $d$-space, and let $Q$ be a convex
polytope such that every $k$-face of $P$ lies on the boundary of $Q$. If $Q$ has
maximum volume among such polytopes, then for every facet $F$ of $Q$,
$P$ contains the center of gravity of $F$.
%
\end{thm}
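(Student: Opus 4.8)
The plan is to generalize the first--variation (rotation) argument from the proof of Theorem~\ref{thm:characterization}, carried out so that the supporting hyperplane of the facet is kept tangent to $P$ throughout the deformation; this is the device that automatically guarantees $P\subseteq Q$ and sidesteps the containment issue. Fix a facet $F$ of $Q$, let $H=\aff(F)$ be its supporting hyperplane with outward unit normal $u$, and let $c$ denote the center of gravity of $F$. Since $H$ supports $Q\supseteq P$, the set $G:=P\cap H$ is a face of $P$ contained in $F$, and every $k$--face of $P$ lying on $F$ is contained in $G$. I want to show that the stationarity of $\vol(Q)$ forces $c\in G\subseteq P$.

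The deformation is to vary the outward normal of the facet, $u\mapsto u+\varepsilon v$, and to translate the facet hyperplane so that it stays a supporting hyperplane of $P$; all other facets of $Q$ are held fixed, and $Q_\varepsilon$ denotes the resulting polytope. In the generic (single contact) case, where $P$ meets $H$ in a single point $g$, this hyperplane is $\{x:\langle u,x\rangle=h_P(u)\}$, and a short computation shows that its outward normal displacement at a point $x\in F$ equals, to first order, the affine function $\ell(x)=\varepsilon\langle v,\,g-x\rangle$. The boundary--variation formula then gives
\[
\frac{d}{d\varepsilon}\Big|_{0}\vol(Q_\varepsilon)=\int_F \ell \,d\mathcal H^{d-1}=\varepsilon\,\vol(F)\,\langle v,\,g-c\rangle ,
\]
using that the average of an affine function over $F$ is its value at the centroid $c$. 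Since $Q$ is a maximum this must vanish for every $v$, whence $g=c$; that is, the unique contact point is the centroid, and in particular $c\in P$. This is exactly the higher--dimensional incarnation of the midpoint conclusion of Theorem~\ref{thm:characterization}.

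The main obstacle is the case of \emph{extended contact}, where $G$ is a face of $P$ of positive dimension containing several of the prescribed $k$--faces. Then $h_P$ is not differentiable at $u$, the admissible variations $v$ are no longer arbitrary (each pinned $k$--face of $P$ must stay on $\bd Q_\varepsilon$), and one is only allowed to tilt $H$ about a $(d-2)$--flat $R$ that contains the affine hulls of these faces. For such tilts $\ell$ vanishes on $R$, the derivative above becomes $\vol(F)\,\ell(c)$, and keeping $R$ inside the contact face $G$ ensures both that $P$ stays interior away from $R$ (so $P\subseteq Q_\varepsilon$) and that $Q_\varepsilon$ stays convex for small $\varepsilon$ --- the analogue of the elementary estimate ``the reflection of $T_{+}$ about $p_k$ contains $T_{-}$'' used in Theorem~\ref{thm:characterization}. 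The remaining work, and the delicate point, is to check that these sign--constrained admissible tilts still force $c$ into $G$ (and not merely into $\aff(G)$): choosing $R$ to range over the supporting $(d-2)$--flats of $G$ through its boundary and comparing the one--sided derivatives pins $c$ to $G$, after which $c\in G\subseteq P$ gives the claim. I expect the bookkeeping of which rotation senses are admissible, as a function of how the pinned $k$--faces sit inside $G$, to be the only nontrivial part of the argument.
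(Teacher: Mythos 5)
Your single--contact case is correct, and it is in substance the paper's entire proof. The paper argues by contradiction: if the centroid $q$ of $F$ is not in $P'=P\cap F$, it chooses a $(d-2)$-flat $L\subset\aff F$ with $L\cap P'\neq\emptyset$, $q\notin L$, separating $P'$ from $q$, rotates the supporting halfspace of $F$ about $L$, and observes that the first-order volume change is proportional to the torque of $F$ about $L$, which is nonzero since $q\notin L$. Your tilted-normal computation, with displacement $\langle v,g-x\rangle$ and derivative $\vol_{d-1}(F)\,\langle v,g-c\rangle$, is the same first variation written through the support function, and in the single-contact case it is valid.

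The extended-contact case, which you defer as ``the remaining work'' and expect to be bookkeeping, is a genuine gap, and it cannot be closed. First, your claim that keeping the axis $R$ inside the contact face $G$ ensures $P\subseteq Q_\varepsilon$ is false unless $G\subseteq R$: if $G$ has points strictly on both sides of $R$ inside $\aff F$, then either sense of tilt pushes part of $G$ (hence of $P$) out of the rotated halfspace and takes the pinned $k$-faces lying there off $\bd Q_\varepsilon$, so for a flat $R$ supporting $G$ \emph{neither} rotation sense is admissible and the first variation yields no constraint on $c$. (The paper's own proof has exactly this defect, since it requires only $L\cap P'\neq\emptyset$: its volume-increasing rotation moves the part of $P'$ off $L$ outside $Q_\phi$, so $Q_\phi$ is not an admissible competitor; you have correctly isolated the step the paper glosses over.) Second, and decisively, the conclusion itself fails in this regime, so no bookkeeping of rotation senses can force $c\in G$. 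Take $d=2$, $k=0$ and $P$ the regular pentagon: by Remark~\ref{rem:regular_ngon}(2) a maximum area circumscribed polygon is the triangle bounded by the lines through $S_1,S_2,S_4$ (sequence $UUNUN$). Its side containing $S_1$ runs from $p_2$ to the point $z$ where the lines through $S_1$ and $S_4$ meet. The only admissible perturbation of this side is the one-sided tilt about $p_1$ (tilting about any other point takes $p_1$ or $p_2$ off $\bd Q$, and tilting the other way cuts off $p_2$), and its first-order area change is $\tfrac12\bigl(|p_1p_2|^2-|p_1z|^2\bigr)\phi$; maximality therefore forces $|p_1z|\geq|p_1p_2|$, i.e.\ it forces the midpoint of this side to lie at or beyond $p_1$, \emph{outside} the relative interior of $S_1$ and hence outside $P$ except in the degenerate equality case (for the pentagon, $|p_1z|>|p_1p_2|$ strictly). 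So for facets whose pinned faces affinely span $\aff F$, maximality pushes the centroid out of $P$ rather than into it; any correct statement must retain the disjunctive form of Theorem~\ref{thm:characterization} (centroid in $P$ \emph{or} the facet's hyperplane is pinned by faces of $P$), and a proof plan aiming to establish $c\in G$ for every facet cannot succeed.
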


\begin{proof}
Assume for contradiction that the center of gravity $q$ of $F$ does not belong
to $P'=P \cap F$. Then there is a $(d-2)$-dimensional affine subspace $L$ in
$F$ that separates $P'$ and $q$, but for which $P' \cap L \neq \emptyset$ and
$q \notin L$. Let $r_{L,\phi}$ denote the rotation of $\Re^d$ about $L$ with
angle $\phi$, such that for sufficiently small $\phi > 0$, $r_{L,\phi}(F)$
intersects $P$. If it exists, let $Q_{\phi}$ denote the convex polytope,
obtained by replacing the supporting halfspace $H$ of $Q$ determined by $F$
by $r_{\phi,L}(H)$. Observe that the derivative of $\vol_d(Q_{\phi})$ is
proportional to the torque of $F$ with respect to $L$. Nevertheless, since $L$
separates $q$ and $F'$, this torque is positive, which means that $Q$ has no
maximum volume.
\end{proof}

It is an interesting question to ask how well the area of a convex polygon
$P$ can be approximated by the area of a maximum area circumscribed polygon $Q$.
Clearly, $\frac{\area(Q)}{\area(P)}$ can be arbitrarily large.
This happens, for example, if the sum of two consecutive angles of $P$ is only slightly
larger than $\pi$. On the other hand, $\frac{\area(Q)}{\area(P)} > 1$
is satisfied for every convex polygon $P$.
The following proposition shows that this ratio can be arbitrarily close to one as well.

\begin{prop}
Let $n \geq 6$. Then, for every $\varepsilon > 0$
there is a convex $n$-gon $P$ such that for any maximum area polygon
$Q$ circumscribed about $P$, we have $\frac{\area(Q)}{\area(P)} < 1+ \varepsilon$.
\end{prop}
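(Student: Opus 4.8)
The plan is to show that for a suitable $P$ every circumscribed polygon is trapped in a thin neighborhood of $P$, so that its excess area is negligible. The key geometric input, already recorded in the excerpt, is that every vertex of a circumscribed $Q$ lies in the external triangle $T_i$ corresponding to the side it replaces. Let $H$ be the largest distance from a point of any $T_i$ to $P$, and let $P_H=\{x:\dist(x,P)\le H\}=P\oplus B(0,H)$ be the closed $H$-neighborhood of $P$. Since $P_H$ is convex and contains both $P$ and every vertex of $Q$, it contains $Q=\conv(\{\text{vertices of }Q\})$. By the Steiner formula,
\[
\area(Q)\le\area(P_H)=\area(P)+\perim(P)\,H+\pi H^2,
\]
so $\frac{\area(Q)}{\area(P)}\le 1+\frac{\perim(P)\,H+\pi H^2}{\area(P)}$. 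This bound holds for \emph{every} circumscribed $Q$, in particular for a maximum area one, so it suffices to exhibit, for each $n\ge 6$, convex $n$-gons whose area and perimeter stay bounded (with area bounded away from $0$) while $H\to 0$.

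To construct such $P$ I would start from a fixed equilateral triangle $\Delta$ and obtain $P$ by placing $n-3$ additional vertices on the sides of $\Delta$, each displaced outward by a small amount $\delta$ so as to create a shallow convex bulge. Since $n-3\ge 3$, these vertices can be distributed so that each side of $\Delta$ carries at least one of them. As $\delta\to0$ the area and perimeter of $P$ converge to those of $\Delta$ (so $\area(P)$ stays bounded below by roughly $\area(\Delta)>0$), while the exterior angle at each added vertex tends to $0$, the polygon remains convex, and one checks that the consecutive-angle condition holds throughout.

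The heart of the matter is to verify that $H\to0$ as $\delta\to0$, that is, that every external triangle retreats to its base. Writing $\beta_i$ for the exterior angle at $p_i$ and $\ell_i=|S_i|$, the height of $T_i$ equals
\[
h_i=\frac{\ell_i}{\cot\beta_i+\cot\beta_{i+1}}=\frac{\ell_i\,\sin\beta_i\sin\beta_{i+1}}{\sin(\beta_i+\beta_{i+1})},
\]
which is positive precisely when $\beta_i+\beta_{i+1}<\pi$, i.e.\ exactly under the existence hypothesis. Because each side of $\Delta$ carries an added vertex, every edge of $P$ has at least one endpoint at an added vertex, whose exterior angle tends to $0$; hence $h_i\to 0$ for every $i$. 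At a corner of $\Delta$ the exterior angle stays near $2\pi/3$, making $T_i$ obtuse with apex displaced beyond the base, but this displacement is $O(h_i)$, so the whole triangle still collapses to within $H\to 0$ of $P$. This is exactly where $n\ge 6$ is used: at least one added vertex per side is needed both to force all external triangles small and to guarantee $\beta_i+\beta_{i+1}<\pi$ for every $i$, since an edge joining two corners of $\Delta$ directly would give $\beta_i+\beta_{i+1}=4\pi/3$ and $A(P)$ would not exist.

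The main obstacle is precisely this last verification — that all external triangles, including the three adjacent to the corners of $\Delta$, shrink simultaneously and uniformly, together with confirming genuine convexity and the consecutive-angle condition for all small $\delta$ (the obtuse corner triangles requiring the small extra estimate on the apex displacement). Once $H\to0$ is established, choosing $\delta$ small enough makes $\frac{\perim(P)\,H+\pi H^2}{\area(P)}<\varepsilon$, which yields $\frac{\area(Q)}{\area(P)}<1+\varepsilon$ and completes the proof.
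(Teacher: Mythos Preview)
Your argument is correct and reaches the same conclusion, but by a different path than the paper. The paper uses the containment $Q\subset P\cup\bigcup_i T_i$ directly and bounds $\sum_i\area(T_i)<\varepsilon$; its construction starts from a unit-area triangle $p_1p_2p_3$, places $p_4$ very close to $p_3$ and $p_n$ very close to $p_1$ (forcing $\area(T_1),\area(T_2)<\varepsilon/3$), and drops the remaining vertices near the segment $p_4p_n$ so that the remaining external triangles together contribute less than $\varepsilon/3$. You instead pass through the $H$-neighborhood $P_H$ and the Steiner formula, and build $P$ by bulging each side of an equilateral triangle outward. Your detour through $P_H$ is sound (the distance to $P$ is a convex function, so its maximum on each $T_i$ is attained at the apex, which you correctly show tends to a vertex of $P$), but it is strictly more work than the paper's one-line area bound $\area(Q)\le\area(P)+\sum_i\area(T_i)$. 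On the other hand, your construction is arguably more transparent about \emph{why} $n\ge 6$ is needed: exactly one extra vertex per side of $\Delta$ guarantees that every edge of $P$ has an endpoint with vanishing exterior angle, and hence every $T_i$ collapses. One small point to tidy up: when a side of $\Delta$ carries more than one added vertex, placing them all at the same outward displacement $\delta$ would make them collinear; you should say explicitly that the displacements are chosen (e.g.\ along a shallow circular arc) so that the resulting polygon is strictly convex.
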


\begin{proof}
Let $p_1, p_2, \ldots p_n$ be the vertices of $P$ in counterclockwise order.
For $i=1,2,\ldots,n$, let $T_i$ denote the external triangle that belongs to the
side $p_ip_{i+1}$.
We show the existence of a convex $n$-gon $P$ such that
$\sum_{i=1}^n \area(T_i) \leq \varepsilon$, and $\area(P) \geq 1$.
Since $Q \subset P \cup \left( \bigcup_{i=1}^n T_i \right)$,
this will clearly imply our statement.

Let $p_1$, $p_2$ and $p_3$ be the vertices of a triangle of unit area,
in counterclockwise order.
We choose the vertex $p_4$ in such a way that $p_4$ is sufficiently close to
$p_3$, and $\area(T_2) < \frac{\varepsilon}{3}$.
We choose $p_n$ similarly, close to $p_1$ and satisfying
$\area(T_1) < \frac{\varepsilon}{3}$.
Note that if $p_4$ and $p_n$ are sufficiently close to $p_3$ and $p_1$,
respectively, then the sum of the areas of the two triangles,
one bounded by the lines through $p_2p_3$, $p_3p_4$ and $p_4p_n$,
and the other one bounded by the lines through $p_4p_n$, $p_np_1$ and $p_1p_2$,
is less than $\frac{\varepsilon}{3}$ (cf. Figure~\ref{fig:prop1}). Now if we put the remaining vertices
sufficiently close to the segment $p_4p_n$, then $\sum_{i=1}^n \area(T_i) < \varepsilon$.
\end{proof}

\begin{figure}
\begin{centering}
\includegraphics[width=0.5\textwidth]{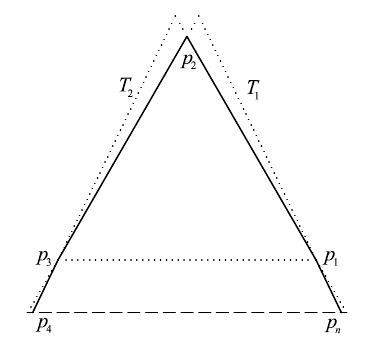}
\caption[]{Consctruction of a convex polygon $P$ such that the area of a maximum area polygon circumscribed about $P$ is `close' to $\area(P)$}
\label{fig:prop1}
\end{centering}
\end{figure}

\textbf{Acknowledgements}

\bigskip

Markus Ausserhofer acknowledges support through FWF-project Y782.
Susanna Dann thanks Oberwolfach Research Institute for Mathematics for its hospitality and support, where part of this project was carried out.
Zsolt L\'angi was supported by the National Research, Development and Innovation Office, NKFIH, K-119670, and the J\'anos Bolyai Research Scholarship of
the Hungarian Academy of Sciences. Support of grant BME FIKP-V\'IZ by EMMI is kindly acknowledged.
G\'eza T\'oth was supported by the National Research, Development and Innovation Office, NKFIH, K-111827,
and his work is connected to the scientific program of the " Development
of quality-oriented and harmonized R+D+I strategy and functional model
at BME" project, supported by the New Hungary Development Plan
(Project ID: T\'AMOP-4.2.1/B-09/1/KMR-2010-0002).

We express our sincere gratitude to Nicolas Bonneel, who directed our attention to the contradiction between the result of Zaremba in \cite{Zaremba} and our result in \cite{ADLT}.

\end{document}